\renewcommand{\emph}[1]{\textit{#1}}
\definecolor{brown}{cmyk}{0, 0.72, 1, 0.45}
\definecolor{grey}{gray}{0.5}
\newcommand{\old}[1]{}
\newcommand{\abs}[1]{\left|#1\right|}
\newcounter{rot}
\newcommand{\ignore}[1]{}
\def\cA{{\cal A}}
\newcommand{\set}[1]{\left\{#1\right\}}
\newcommand{\ee}{\mathcal{E}}
\newcommand{\proofstart}{{\noindent \bf Proof\hspace{2em}}}
\newcommand{\proofend}{\hspace*{\fill}\mbox{$\Box$}\\ \medskip\\ \medskip}
\def\ii_(#1,#2){i_{#1}^{#2}}
\def\a{\alpha}
\def\b{\beta}
\def\d{\delta}
\def\e{\varepsilon}
\def\f{\phi}
\def\G{\Gamma}
\def\th{\theta}
\def\l{\lambda}
\def\m{\mu}
\def\n{\nu}
\def\p{\pi}
\def\s{\sigma}
\def\cE{\mathcal{E}}
\newcommand{\brac}[1]{\left( #1 \right)}
\newcommand{\expect}{\operatorname{\bf E}}
\def\E{\expect}
\renewcommand{\Pr}{\operatorname{\bf Pr}}
\newcommand\bfrac[2]{\left(\frac{#1}{#2}\right)}
\newcommand{\ep}{\varepsilon}
\newtheorem{theorem}{Theorem}
\newtheorem{conjecture}{Conjecture}
\newtheorem{lemma}{Lemma}
\newtheorem{remthm}[lemma]{Remark}
\newtheorem{observation}[lemma]{Observation}
\newcounter{thmtemp}
\newcommand{\nospace}[1]{}
\def\path{\operatorname{PATH}}
\newcommand{\al}{\alpha}
\newcommand{\chic}{\chi_{\mathrm{c}}}
\newcommand{\N}{\mathbb{N}}
\newcommand{\beq}[1]{\begin{equation}\label{#1}}
\def\eeq{\end{equation}}
\newcommand{\stm}{\setminus}
\newcommand{\dist}{\mathrm{dist}}
\newcommand{\bbb}{\mathcal{B}}
\newcommand{\st}{\mid}
\begin{document}
\title{Between 2- and 3-colorability}
\author{Alan Frieze\thanks{Research supported in part by NSF grant
    ccf1013110}, Wesley Pegden\\Department of Mathematical
  Sciences,\\Carnegie Mellon University,\\Pittsburgh PA 15213.}
\maketitle
\begin{abstract}
We consider the question of the existence of homomorphisms between $G_{n,p}$ and odd cycles when $p=c/n,\,1<c\leq 4$. We show that for any positive integer $\ell$, there exists $\e=\e(\ell)$ such that if $c=1+\e$ then w.h.p. $G_{n,p}$ has a homomorphism from $G_{n,p}$ to $C_{2\ell+1}$ so long as its odd-girth is at least $2\ell+1$. On the other hand, we show that if $c=4$ then w.h.p. there is no homomorphism from $G_{n,p}$ to $C_5$. Note that in our range of interest, $\chi(G_{n,p})=3$ w.h.p., implying that there is a homomorphism from $G_{n,p}$ to $C_3$.
\end{abstract}
\section{Introduction}
The determination of the chromatic number of $G_{n,p}$, where $p=\frac
c n$ for constant $c$, is a central topic in the theory of random
graphs.  For $0<c<1$, such graphs contain, in expectation, a bounded number of cycles,
and are almost-surely 3-colorable. The chromatic number of such
a graph may be 2 or 3 with positive probability, according as to
whether or not any odd cycles appear.

For $c\geq 1$, we find that the chromatic number $\chi(G_{n,\frac c
  n})\geq 3$ with high probability, and letting $c_k:=\sup_c
\chi(G_{n,\frac c n})\leq k$, it is known for all $k$ and $c\in
(c_k,c_{k+1})$ that $\chi(G_{n,\frac c n})\in \{k,k+1\}$, see
{\L}uczak \cite{Lu} and Achlioptas and Naor \cite{AN}; for $k>2$, the
chromatic number may well be concentrated on the single value $k$, see
Friedgut \cite{Fried} and Achlioptas and Friedgut \cite{AcFr}.

In this paper, we consider finer notions of colorability for the
graphs $G_{n,\frac c n}$ for $c\in (1,c_3)$, by considering
homomorphisms from $G_{n,\frac c n}$ to odd cycles $C_{2\ell+1}$. A
homomorphism from a graph $G$ to $C_{2\ell+1}$ implies a homomorphism to $C_{2k+1}$ for $k<\ell$.
As the 3-colorability of a graph $G$ corresponds to the existence of a
homomorphism from $G$ to $K_3$, the existence of a homomorphism to
$C_{2\ell+1}$ implies 3-colorability. Thus considering homomorphisms
to odd cycles $C_{2\ell+1}$ gives a hierarchy of 3-colorable graphs
amenable to increasingly stronger constraint satisfaction
problems. Note that a fixed graph having a homomorphism to any odd-cycle is bipartite.

Our main result is the following:
\begin{theorem}
\label{t.hom}
For any $\ell>1$, there is an $\ep>0$ such that with high probability, $G_{n,\frac{1+\ep}{n}}$ either has odd-girth $<2\ell+1$ or has a homomorphism to $C_{2\ell+1}$.  
\end{theorem}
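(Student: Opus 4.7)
My plan is to construct the homomorphism explicitly by analyzing the standard structural decomposition of $G := G_{n,(1+\ep)/n}$ near the giant-component threshold, and to match the constraints imposed by ``short'' parts of the 2-core with the algebraic structure of $C_{2\ell+1}$ using the odd-girth hypothesis.

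Recall that the $2$-core $C$ of $G$ can be described via its kernel $K$ (the degree $\geq 3$ backbone) together with subdivision lengths $L_e \geq 1$ for each $e \in E(K)$: $C$ is obtained from $K$ by replacing each edge $e$ with a path of $L_e$ edges. Identifying $V(C_{2\ell+1})$ with $\mathbb{Z}/(2\ell+1)$, a homomorphism $h:C \to C_{2\ell+1}$ corresponds exactly to a labeling $h(v) \in \mathbb{Z}/(2\ell+1)$ of the kernel vertices $v \in V(K)$ such that for every kernel edge $e=uv$,
\[
h(v) - h(u) \in R_{L_e} := \{L_e - 2k \pmod{2\ell+1} : 0 \leq k \leq L_e\}.
\]
A direct calculation yields $R_L = \mathbb{Z}/(2\ell+1)$ whenever $L \geq 2\ell$, so only the \emph{short} kernel edges $E_s := \{e : L_e < 2\ell\}$ impose nontrivial constraints. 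Moreover, because $R_{L_1} + \cdots + R_{L_k} = R_{L_1 + \cdots + L_k}$ (Minkowski sum mod $2\ell+1$) and $0 \in R_L$ iff $L$ is even or $L \geq 2\ell+1$, any single cycle $(e_1,\ldots,e_k)$ in the short-edge multigraph $K_s := (V(K), E_s)$ can be closed consistently iff either $\sum L_{e_i}$ is even or $\sum L_{e_i} \geq 2\ell+1$ --- and both cases are guaranteed by the odd-girth hypothesis applied to the length-$\sum L_{e_i}$ cycle of $C$ that $(e_1,\ldots,e_k)$ traces.

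The remaining task is to show that, for $\ep = \ep(\ell)$ chosen small enough, \whp the multigraph $K_s$ is simple enough that the per-edge and per-cycle constraints can be solved greedily. The necessary structural inputs are standard in the style of {\L}uczak's analysis: \whp $K$ has $O(\ep^3 n)$ vertices and edges, and conditionally on $K$ the subdivision lengths $L_e$ are approximately independent with a distribution of mean $\Theta(1/\ep)$. Consequently $\Pr[L_e < 2\ell] = O(\ell\ep)$, so $|E_s| = O(\ell \ep^4 n) \ll |V(K)|$, and a standard branching argument shows $K_s$'s components are \whp trees or unicyclic. After labeling $V(K)$, the homomorphism extends along each subdivision path by choosing a walk of length $L_e$ in $C_{2\ell+1}$, then to the pendant trees of $G$ hanging off $C$, and finally to the small components of $G$ outside the giant (which are \whp trees or single cycles, and the odd-girth hypothesis handles the odd-cycle case).

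The main technical obstacle is controlling the component structure of $K_s$ uniformly in the small parameter $\ep$. In particular, multi-cyclic components such as theta-subgraphs impose a simultaneous constraint $h(v) - h(u) \in R_{L_1} \cap R_{L_2} \cap R_{L_3}$ that does not obviously reduce to the single-cycle analysis above; addressing this requires either showing that such configurations do not occur \whp (plausible given the very low edge density of $K_s$) or a cycle-space argument inside $K$ showing that simultaneous satisfiability still follows from the odd-girth hypothesis alone.
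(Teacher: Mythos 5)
Your route is genuinely different from the paper's, and the gap you flag at the end is a real one, not just a loose end. The paper never introduces the kernel $K$ or the subdivision lengths $L_e$. Instead, Lemma~\ref{l.decomp} produces a decomposition $E(G)=F\cup M$ with $F$ a spanning forest and the edges of $M$ pairwise at $F$-distance at least $4\ell-2$; this is built from Lemma~\ref{l.short} (short cycles are w.h.p.\ far apart) and Lemma~\ref{l.long} (every long cycle of the $2$-core w.h.p.\ contains a long run of degree-$2$ vertices), so every cycle can be cut by one well-isolated edge. The coloring then $2$-colors $F$ and, for each monochromatic $M$-edge $xy$, locally rotates colors in a radius-$(2\ell-1)$ $F$-ball around one endpoint; the $4\ell-2$ separation makes these local shifts non-interacting, and the odd-girth hypothesis rules out the one failure mode (that $x,y$ lie at even $F$-distance $<2\ell-1$, which would put a short odd cycle in $F\cup\{xy\}$). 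That ``cut one edge per cycle'' decomposition sidesteps all simultaneous constraints from the cycle space of the $2$-core, which is exactly where your approach runs into trouble.

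Concretely on the gap: your unicyclic analysis is fine, since $R_{L_1}+\cdots+R_{L_k}=R_{\sum L_i}$ and $0\in R_L$ iff $L$ is even or $L\geq 2\ell+1$ correctly reduces a single cycle to the odd-girth hypothesis. But a multicyclic component of $K_s$ --- already a double or triple kernel edge --- forces a constraint of the form $h(v)-h(u)\in R_{L_1}\cap R_{L_2}\cap R_{L_3}$, and nonemptiness of that \emph{intersection} does not follow just from the three pairwise cycle sums being even-or-long. You would either need to handle the full cycle space of $K_s$ directly, or show that multicyclic components of $K_s$ do not occur w.h.p. The latter is the more plausible route, but it is not delivered by ``a standard branching argument'' as stated: $K_s$ is a correlated random subgraph of the random multigraph $K$, with the $L_e$ only approximately independent, and the required uniformity in $\ep$ is exactly the delicate part. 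The paper's Lemma~\ref{l.long} (no long cycle with a $\beta$ fraction of degree-$\geq 3$ vertices) is proved by a direct configuration-model computation and gives the separation structure needed without ever touching the kernel; your route would need a comparably careful argument for the structure of $K_s$ before it closes.
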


Conversely, we expect the following:
\begin{conjecture}
\label{c.nohom}
For any $c>1$, there is an $\ell_c$ such that with high probability, there is no homomorphism from $G_{n,\frac c n}$ to $C_{2\ell+1}$ for $\ell\geq \ell_c$.  
\end{conjecture}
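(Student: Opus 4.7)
\textbf{Proof proposal for Conjecture~\ref{c.nohom}.}

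The conjecture reduces to producing, for each $c>1$, a single $\ell_c = \ell_c(c)$ such that \whp\ $G_{n,c/n} \not\to C_{2\ell_c+1}$: for $\ell \geq m$ the classical homomorphism $C_{2\ell+1} \to C_{2m+1}$ exists (folding the larger odd cycle onto the smaller), so $G \to C_{2\ell+1}$ implies $G \to C_{2m+1}$ by composition, and the full conjecture follows by contraposition. It further suffices to analyze the $2$-core $K$ of $G_{n,c/n}$, since pendant trees can always be labeled along any edge of the target; for $c>1$, \whp\ $K$ has $\Theta(n)$ vertices and cyclomatic number $\Theta(n)$.

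The plan is a moment-method analysis of the number $N$ of homomorphisms $K \to C_{2\ell+1}$, viewed as labelings $f:V(K)\to\Z/(2\ell+1)\Z$ with $f(u)-f(v)\equiv \pm 1$ on each edge $uv$. Fixing a spanning tree $T \subseteq K$, such a labeling is determined (up to a global rotation of $C_{2\ell+1}$) by signs $\sigma:E(T)\to\{\pm 1\}$, and each of the $\Theta(n)$ non-tree edges imposes a single constraint modulo $2\ell+1$: the signed sum around its fundamental cycle must vanish mod $2\ell+1$. Heuristically, if these constraints behaved generically, one would have $\E[N]\to 0$ for $\ell$ large enough relative to $c$, and the conjecture would follow from a first-moment bound.

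The chief obstacle is that the first moment is dominated by highly structured labelings: a saddle-point analysis of $\E[N]$ over label-frequency vectors $\alpha=(\alpha_0,\ldots,\alpha_{2\ell})$ shows the maximum is attained at the uniform distribution $\alpha_i = 1/(2\ell+1)$, where $\E[N]$ grows like $\exp(n(\log(2\ell+1) - c/2 + O(1/\ell)))$, which is exponentially large for any fixed $c>1$ and $\ell\geq 2$. Thus a naive first-moment approach cannot work, and one must pass to a second moment argument that detects condensation of the space of homomorphisms. I would attempt this in the spirit of Achlioptas--Coja-Oghlan and related work on the $k$-colorability threshold of $G_{n,c/n}$: combine (i) small-subgraph conditioning (Robinson--Wormald) to account for the Poisson short-cycle structure, with (ii) a restricted second moment on ``balanced'' homomorphisms to handle the bulk entropy. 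The principal technical difficulty --- and what presumably makes this a conjecture rather than a theorem --- is establishing the requisite concentration: one needs to show that the effective constraint density from the linearly many cycles of $K$ truly outweighs the $\log(2\ell+1)$ entropy of labelings, once the overcounting due to clustering is properly handled. An alternative route through a direct lower bound on $\chic(G_{n,c/n})$ bounded away from $2$ via LP duality (e.g.\ an odd-cycle-packing certificate in $K$) faces the same essential obstacle: containment of a single short odd cycle only yields positive-probability rather than \whp\ non-existence for any fixed $\ell_c$, so something more global and quantitative is required.
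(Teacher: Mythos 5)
This statement is a conjecture in the paper, and the paper does not prove it; it establishes it only for $c>2.774$ (Theorem~\ref{t.nohom}) and, with $\ell_4=2$, for $c=4$ (Theorem~\ref{t.l4}). Your proposal likewise does not constitute a proof --- you essentially say so yourself --- so the useful comparison is between your plan and the paper's partial results. Your reduction to a single $\ell_c$ via the folding homomorphism $C_{2\ell+1}\to C_{2\ell_c+1}$ is correct and implicit in the paper, and your diagnosis that the naive first moment over all $(2\ell+1)$-labelings explodes (entropy $\log(2\ell+1)$ against energy roughly $c/2$) is also correct. But your proposed remedy points the wrong way: the second moment method and small-subgraph conditioning are tools for establishing that solutions \emph{exist} w.h.p., whereas the conjecture demands w.h.p.\ \emph{non-existence}. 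What is needed is a way to prune the first moment so that it no longer overcounts, and your sketch supplies no such step; the LP-duality/odd-cycle-packing alternative you mention is likewise left entirely unexecuted.

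It is worth recording the two pruning devices the paper actually uses, since you missed both. For Theorem~\ref{t.nohom} the key observation is that deleting the smallest color class of a $C_{2\ell+1}$-coloring leaves an \emph{induced bipartite subgraph} on at least $\frac{2\ell}{2\ell+1}|V(G)|$ vertices; counting pairs (vertex subset, bipartition) replaces the $(2\ell+1)^n$ entropy by $\binom{n}{\beta n}2^{\beta n}$, and the resulting first moment \eqref{e.bipartite} is exponentially small for $c>2.774$ once $\beta>0.999971$. This one-line reduction sidesteps all of the condensation machinery on a large range of $c$, though it provably cannot reach every $c>1$: for $c$ close to $1$ the random graph w.h.p.\ does contain induced bipartite subgraphs on $(1-o_c(1))n$ vertices, consistent with Theorem~\ref{t.hom}, which forces $\ell_c\to\infty$ as $c\to 1^+$. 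For $c=4$ and $\ell=2$ the paper instead tames the first moment over genuine $C_5$-homomorphisms by normalizing them to satisfy \textbf{P1}--\textbf{P4} (every vertex of $V_i$ has a neighbor in $V_{i-1}$, and every vertex of $V_2$ one in $V_3$), which multiplies the count by factors of the form $\prod_i(1-e^{-c\alpha_{i-1}})^{\alpha_i n}$ and pushes the exponential rate below $1$, at the cost of a computer-assisted optimization. Extending either device to all $c>1$ is precisely the open problem; your proposal names the obstacle but does not overcome it.
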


As $c_3$ is known to be at least $4.03$, the following confirms Conjecture \ref{c.nohom} for a significant portion of the interval $(1,c_3)$.
\begin{theorem}
\label{t.nohom}
For any $c>2.774$, there is an $\ell_c$ such that with high probability, there is no homomorphism from $G_{n,\frac c n}$ to to $C_{2\ell+1}$ for $\ell\geq \ell_c$.
\end{theorem}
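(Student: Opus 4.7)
The plan rests on two observations: (i) any homomorphism $f: G \to C_{2\ell+1}$ yields, by averaging over rotations, a bipartition of $V(G)$ with at most $|E(G)|/(2\ell+1)$ monochromatic edges, and (ii) for $c > 4\ln 2 \approx 2.7726 < 2.774$, a first-moment argument shows that w.h.p.\ every bipartition of $V(G_{n,c/n})$ has at least $\alpha n$ monochromatic edges for some $\alpha = \alpha(c) > 0$. Since $|E(G_{n,c/n})| = (1+o(1))cn/2$ w.h.p., (i) delivers a bipartition with at most $(1+o(1))cn/(2(2\ell+1))$ monochromatic edges, which is less than $\alpha n$ as soon as $\ell \geq \ell_c := \lceil c/(4\alpha) \rceil$, contradicting (ii).

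For (i), fix a hom $f$ and write $V_i = f^{-1}(i)$, $E_i = e_G(V_i, V_{i+1})$ (indices mod $2\ell+1$), so $\sum_i E_i = |E(G)|$. For each $a \in \Z_{2\ell+1}$, the set $A_a = \{a, a+2, a+4, \ldots, a+2\ell\} \pmod{2\ell+1}$ has $\ell+1$ elements; since $a+2\ell \equiv a-1$ and $2\ell+1$ is odd, $\{a-1,a\}$ is the \emph{only} edge of $C_{2\ell+1}$ with both endpoints in $A_a$. Hence the induced bipartition $(f^{-1}(A_a), V\setminus f^{-1}(A_a))$ of $V(G)$ has exactly $E_{a-1}$ monochromatic edges, and averaging over $a \in \Z_{2\ell+1}$ gives the claim.

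For (ii), for a bipartition with $|A|=an$ the number of monochromatic edges is binomial with mean $\approx c(a^2+(1-a)^2)n/2$, and Chernoff gives $\Pr[\mathrm{mono}(A,B) \leq \alpha n] \leq \exp(-n\Psi(a,\alpha))$ with $\Psi(a,\alpha) = \tfrac{c}{2}(a^2+(1-a)^2) - \alpha - \alpha\ln(c(a^2+(1-a)^2)/(2\alpha))$. Summing over all $\sim \exp(n\,h(a))$ bipartitions (where $h(a) = -a\ln a - (1-a)\ln(1-a)$) the first moment is bounded by $\exp\bigl(n\max_a[h(a)-\Psi(a,\alpha)]\bigr)$. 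At $\alpha = 0$, symmetry of $h$, convexity of $a \mapsto a^2+(1-a)^2$, and concavity of $h$ put the maximum at $a = 1/2$, where the exponent reduces to $\ln 2 - c/4$; this is strictly negative when $c > 4\ln 2$, and by continuity a sufficiently small $\alpha(c) > 0$ preserves negativity, completing (ii). Combining with $|E(G_{n,c/n})| = (1+o(1))cn/2$ and $\ell \geq \ell_c$ yields the theorem.

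\textbf{Main obstacle.} The essential combinatorial trick is the rotational averaging in (i): the $2\ell+1$ near-alternating 2-colorings of $C_{2\ell+1}$ (each with exactly one monochromatic cycle edge) distribute the ``defect'' evenly among the $E_i$, so the minimum scales as $1/(2\ell+1)$. Without this gain factor, the hypothesized hom would only force a bipartition whose defect could be $\Theta(n)$ and the contradiction with (ii) would fail. The first-moment step is routine but hinges on verifying that $a=1/2$ is genuinely the extremal partition size, so that $4\ln 2$ is the correct threshold.
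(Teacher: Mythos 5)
Your proposal is correct, and it takes a genuinely different (though closely related) route from the paper. The paper's argument is vertex-based: by pigeonhole over the $2\ell+1$ color classes, delete the smallest one (of size $\leq n/(2\ell+1)$) to obtain an induced bipartite subgraph on $\geq \frac{2\ell}{2\ell+1}n$ vertices, and then apply the first-moment bound \eqref{e.bipartite} showing that $G_{n,c/n}$ has no induced bipartite subgraph on $\beta n$ vertices once $\beta > 0.999971$ (for $c>2.774$; the limiting threshold as $\beta\to 1$ is exactly $4\ln 2$, matching yours). Your argument is instead edge-based: the rotational averaging over the $2\ell+1$ ``one-defective'' $2$-colorings of $C_{2\ell+1}$ pigeonholes over edge types $E_0,\dots,E_{2\ell}$ to extract a bipartition of $V(G)$ with at most $|E(G)|/(2\ell+1)$ monochromatic edges, and then a Chernoff-based first moment shows no bipartition has $\leq \alpha n$ monochromatic edges when $c>4\ln 2$. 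Your Step (i) is carefully and correctly argued (the uniqueness of the $C_{2\ell+1}$-edge inside $A_a$, and the absence of edges in $V(C_{2\ell+1})\setminus A_a$, are both right), and the continuity argument in Step (ii), pushing from $\alpha=0$ to a small positive $\alpha(c)$, is valid since $a^2+(1-a)^2\geq 1/2$ bounds the mean away from zero uniformly. The paper's version is shorter and avoids large deviations entirely, while yours requires the Chernoff estimate but is self-contained and isolates the $1/(2\ell+1)$ gain cleanly as an edge-averaging phenomenon; both land on the same asymptotic threshold. One small thing worth making explicit in a writeup: since a homomorphism to $C_{2\ell+1}$ implies one to $C_{2\ell_c+1}$ for $\ell\geq\ell_c$, it suffices to rule out $\ell=\ell_c$ alone, so no union bound over $\ell$ is needed.
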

We also have that $\ell_4=2$:
\begin{theorem}
\label{t.l4}
With high probability, $G_{n,\frac{4}{n}}$ has no homomorphism to $C_5$.
\end{theorem}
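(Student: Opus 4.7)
The plan is to pass to the 2-core $K$ of $G_{n,4/n}$ and show $\Pr[K\to C_5]=o(1)$; since any $C_5$-homomorphism of $G_{n,4/n}$ restricts to one of $K$, this suffices. A straightforward first-moment argument on $G_{n,4/n}$ itself is too weak: at the balanced configuration with each color class of size $n/5$, the expected number of $C_5$-homomorphisms is of order $\exp(0.41\,n)$, exponentially large.

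Let $M$ be the kernel of $K$, i.e. the multigraph obtained by suppressing the degree-$2$ vertices of $K$. A $C_5$-homomorphism of $K$ is equivalent to the choice of a coloring $\psi\colon V(M)\to\{0,\ldots,4\}$ together with, for each edge $e\in E(M)$ (whose corresponding $K$-path has length $\ell_e$), a walk of length $\ell_e$ in $C_5$ from $\psi(u_e)$ to $\psi(v_e)$. Writing $W_\ell(i,j)=(A^\ell)_{ij}$ with $A$ the adjacency matrix of $C_5$, this gives
\[
Z(K) \;=\; \sum_{\psi\colon V(M)\to\{0,\ldots,4\}}\;\prod_{e\in E(M)} W_{\ell_e}\bigl(\psi(u_e),\psi(v_e)\bigr).
\]

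Using the spectral decomposition of $A$, with eigenvalues $\lambda_k=2\cos(2\pi k/5)$ so that $\lambda_0=2$ and $|\lambda_k|\le (1+\sqrt 5)/2<2$ for $k\ne 0$, one obtains after summing over $\psi$ that
\[
Z(K) \;=\; 5^{|V(M)|-|E(M)|}\sum_{(k_e)\in S}\prod_{e\in E(M)}\lambda_{k_e}^{\ell_e},
\]
where $S\subset(\Z/5\Z)^{E(M)}$ is the subspace cut out by the mod-$5$ flow conditions arising from orthogonality of the eigenvectors (at each $v\in V(M)$, the signed sum of $k_e$ over edges incident to $v$ is $\equiv 0\pmod 5$). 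The principal term (all $k_e\equiv 0$) equals $2^{|E(K)|}\cdot 5^{|V(M)|-|E(M)|}$. For $c=4$, the standard 2-core analysis (Pittel--Spencer--Wormald) yields w.h.p.\ $|E(K)|\approx 1.92\,n$, $|V(M)|\approx 0.75\,n$, and $|E(M)|\approx 1.77\,n$, whence the principal term is of order $\exp\bigl((1.92\ln 2-1.02\ln 5)n\bigr)\approx\exp(-0.31\,n)=o(1)$.

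It remains to bound the non-principal contributions. Each such summand carries an additional factor $\prod_e(|\lambda_{k_e}|/2)^{\ell_e}<1$ relative to the principal, and many configurations vanish by the flow constraints; for instance, the all-$k_e=2$ assignment contributes zero at every kernel vertex of degree $3$, since $3\cdot 2=6\not\equiv 0\pmod 5$. Combined with concentration of the path-length distribution $(\ell_e)$ coming from the configuration model, this should yield $\E[Z(K)]=o(1)$, and Markov's inequality then gives the theorem. The main technical obstacle lies precisely in controlling these non-principal corrections: there are $|S|=5^{|E(M)|-|V(M)|+1}$ of them, exponentially many, and the bound must exploit both the sign oscillations ($\lambda_2=\lambda_3<0$) and the flow constraints to show their total contribution remains $O(1)$ times the principal.
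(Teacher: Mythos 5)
Your route---passing to the kernel $M$ of the $2$-core, writing the number of $C_5$-homomorphisms as a transfer-matrix partition function over kernel edges, and expanding via the spectral decomposition of $A(C_5)$---is genuinely different from the paper's. The paper works directly on $G_{n,4/n}$ and sharpens the naive first moment not by discarding low-degree vertices but by insisting the partition satisfy four properties: beyond independence of the $V_i$ and the ``no $V_i$--$V_{i\pm 2}$ edges'' constraint (Hatami's {\bf P1}--{\bf P2}), every $v\in V_i$ with $i\geq 1$ must have a neighbor in $V_{i-1}$, and every $v\in V_2$ a neighbor in $V_3$ ({\bf P3}, {\bf P4}). These play a role morally similar to restricting to the $2$-core, but the accounting is direct and yields a four-variable exponential rate function $b(4,\alpha_0,\ldots,\alpha_3)$ whose supremum over the relevant simplex is shown to be below $1$ by rigorous computer-assisted numerics with explicit gradient bounds. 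Your spectral/kernel route, if completable, would be more conceptual and potentially computer-free, and your computation of the principal term $2^{|E(K)|}5^{|V(M)|-|E(M)|}\approx e^{-0.31n}$ checks out.

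But there is a genuine gap, and you correctly identify its location: the non-principal terms are not a routine estimate. The flow space $S$ has $|S|=5^{|E(M)|-|V(M)|+1}\approx 5^{1.02n}\approx e^{1.64n}$ elements, while the per-edge decay from a nonzero label is only $|\lambda_{k_e}|/2\leq \phi/2\approx 0.809$. Even in the most optimistic triangle-inequality scenario---every non-principal $(k_e)$ placing nonzero labels on kernel edges covering a total $K$-path length $f\cdot 1.92n$---the sum over $S$ is bounded by roughly $e^{1.64n}\,e^{-0.41 f n}$ times the principal, which is exponentially large for any $f\leq 1$. So absolute-value bounds cannot close the argument; you must exploit the actual oscillation ($\lambda_2=\lambda_3\approx -1.618$ is negative) together with the rigidity of the mod-$5$ flow constraints, and do so while the path lengths $(\ell_e)$ are themselves random and weakly dependent in the kernel configuration model. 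The proposal stops exactly where the proof would have to begin, and it is not clear the cancellation is even strong enough: nothing in the sketch rules out that the subdominant terms reinforce rather than cancel for the random $(\ell_e)$ encountered. Absent that analysis, the claim $\E[Z(K)]=o(1)$ is unsupported.
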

Note that as $c_3>4.03>4$, we see that there are triangle-free
3-colorable random graphs without homomorphisms to $C_5$.  Our proof
of Theorem \ref{t.l4} involves computer assisted numerical
computations.  The same calculations which rigorously demonstrate that $\ell_4=2$ suggest actually that $\ell_{3.75}=2$ as well.

Our results can be reformulated in terms of the \emph{circular chromatic number} of a random graph.  Recall that the circular chromatic number $\chic(G)$ of $G$ is the infimum $r$ of circumferences of circles $C$ for which there is an assignment of open unit intervals of $C$ to the vertices of $G$ such that adjacent vertices are assigned disjoint intervals.  (Note that if circles $C$ of circumference $r$ were replaced in this definition with line segments $S$ of length $r$, then this would give the ordinary chromatic number $\chi(G)$.) It is known that $\chi(G)-1< \chic(G)\leq \chi(G)$, that $\chic(G)$ is always rational, and moreover, that $\chic(G)\leq \frac{p}{q}$ if and only if $G$ has a homomorphism to the circulant graph $C_{p,q}$ with vertex set $\{0,1,\dots,q-1\}$, with $v\sim u$ whenever $\dist(v,u):=\min\{\abs{v-u},v+q-u,u+q-v\}\geq q$.  (See \cite{zhu}.)  Since $C_{2\ell+1,\ell}$ is the odd cycle $C_{2\ell+1}$ our results can be restated as follows:
\begin{theorem}
In the following, inequalities for the circular chromatic number hold with high probability.
\begin{enumerate}
\item For any $\delta>0$, there is an $\ep>0$ such that, $G=G_{n,\frac{1+\ep}{n}}$ has $\chic(G)\leq2+\delta$ unless it has odd girth $\leq \frac{2}{\delta}$.
\item For any $c>2.774$, there exists $r>2$ such that $\chic(G_{n,\frac c n})>r$.
\item $2.5\leq \chic(G_{n,\frac 4 n})<3$.
\end{enumerate}
\end{theorem}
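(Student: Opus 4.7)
The plan is to prove this theorem as a direct translation of Theorems \ref{t.hom}, \ref{t.nohom}, and \ref{t.l4} into the language of circular chromatic number, using the equivalence $\chic(G)\leq p/q \Leftrightarrow G\to C_{p,q}$ together with the identification $C_{2\ell+1,\ell}=C_{2\ell+1}$ recalled just before the theorem. So for each odd-cycle target we apply
\[
G\to C_{2\ell+1}\quad\Longleftrightarrow\quad \chic(G)\leq \tfrac{2\ell+1}{\ell}=2+\tfrac1\ell.
\]

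For part (1), given $\delta>0$ I would set $\ell=\max\{2,\lceil 1/\delta\rceil\}$, which ensures both $\ell>1$ (as required by Theorem \ref{t.hom}) and $2+\tfrac1\ell\leq 2+\delta$. Theorem \ref{t.hom} supplies an $\ep=\ep(\ell)>0$ such that w.h.p.\ $G_{n,(1+\ep)/n}$ either has odd-girth $<2\ell+1$ or admits a homomorphism to $C_{2\ell+1}$. In the second case $\chic(G)\leq 2+1/\ell\leq 2+\delta$, and in the first case, since odd-girth is an odd integer strictly less than $2\ell+1$, it is at most $2\ell-1\leq 2\lceil 1/\delta\rceil-1\leq 2/\delta$ (one checks both the integer and non-integer cases of $1/\delta$ separately). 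This gives the dichotomy claimed.

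For part (2), given $c>2.774$ let $\ell_c$ be as in Theorem \ref{t.nohom}, so that w.h.p.\ $G_{n,c/n}\not\to C_{2\ell_c+1}$. By the equivalence above this is exactly $\chic(G)>\tfrac{2\ell_c+1}{\ell_c}$, so I take $r=\tfrac{2\ell_c+1}{\ell_c}>2$.

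For part (3), Theorem \ref{t.l4} states that w.h.p.\ $G_{n,4/n}\not\to C_5=C_{5,2}$, which is equivalent to $\chic(G_{n,4/n})>5/2$, giving the lower bound. The upper bound uses that $c=4<c_3$ (as $c_3\geq 4.03$), so w.h.p.\ $\chi(G_{n,4/n})\leq 3$, and then $\chic(G)\leq\chi(G)\leq 3$ from the inequality quoted before the theorem.

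There is no real obstacle: every step is either the stated equivalence between $\chic$ and odd-cycle homomorphisms or an application of one of the three previous theorems. The only care needed is the elementary arithmetic in part (1) to pass from odd-girth $<2\ell+1$ to odd-girth $\leq 2/\delta$ with the same $\ell$ that makes $2+1/\ell\leq 2+\delta$; this is the one place where a wrong choice of $\ell$ could make the conclusion of Theorem \ref{t.hom} not quite match the desired statement.
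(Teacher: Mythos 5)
Your overall approach---translate each of Theorems \ref{t.hom}, \ref{t.nohom}, \ref{t.l4} into circular-chromatic language via the equivalence $\chic(G)\leq p/q \Leftrightarrow G\to C_{p,q}$ and $C_{2\ell+1,\ell}=C_{2\ell+1}$---is exactly what the paper intends, and the paper gives no further explicit argument. Parts (2) and (3) lower bound are correct. However there are two genuine problems.

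In part (1), the inequality you assert, $2\lceil 1/\delta\rceil-1\leq 2/\delta$, is \emph{false} whenever the fractional part of $1/\delta$ lies in $(0,\tfrac12)$: for instance $\delta=0.45$ gives $1/\delta\approx 2.22$, $\lceil 1/\delta\rceil=3$, $2\cdot 3-1=5$, but $2/\delta\approx 4.44$. Your parenthetical ``one checks both the integer and non-integer cases'' is precisely where the argument breaks; the non-integer case does not go through in general. With $\ell=\lceil 1/\delta\rceil$ the odd-girth escape clause coming out of Theorem \ref{t.hom} is odd girth $\leq 2\lceil 1/\delta\rceil-1$, which can exceed $2/\delta$ by up to $1$. (One can check that for such $\delta$ the theorem as literally stated fails with probability bounded away from $0$: with $\ep$ small, $G_{n,(1+\ep)/n}$ has a $C_5$ and no triangle with probability bounded away from $0$, so odd girth $=5>2/\delta$ while $\chic\geq 5/2>2+\delta$. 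So the flaw is in the target statement's escape clause, not just your arithmetic; the cleanly provable version replaces ``$\leq 2/\delta$'' with ``$\leq 2\lceil 1/\delta\rceil-1$'' or ``$<2/\delta+1$''.)

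In part (3), you prove $\chic\leq 3$, but the theorem claims the strict inequality $\chic<3$. These are not the same, and the gap cannot be closed: if $G$ contains a triangle then $\chic(G)\geq\chic(K_3)=3$, and $G_{n,4/n}$ contains a triangle with limiting probability $1-e^{-32/3}$, which is bounded away from $0$. Hence $\chic(G_{n,4/n})=3$ with probability bounded away from $0$, and the claimed strict bound does not hold w.h.p. So here too the issue lies with the statement as written; what your argument (and the facts $\chi(G_{n,4/n})=3$ w.h.p.\ and $\chic\leq\chi$) legitimately yield is $5/2\leq\chic\leq 3$ w.h.p. It would be worth flagging both discrepancies rather than asserting the stated bounds follow.
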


Note that for any $c$ and $\ell>1$, there is positive probability that
$G_{n,\frac c n}$ has odd girth $<2\ell+1$, and a positive probability
that it does not.  In particular, as the probability that $G_{n,\tfrac{c}{n}}$
has small odd-girth can be computed precisely, Theorem \ref{t.hom}
gives an exact probability in $(0,1)$ that $G_{n,\frac{1+\ep}{n}}$ has
a homomorphism to $C_{2\ell+1}$. Indeed, Theorem \ref{t.hom} implies
that if $c=1+\e$ and $\e$ is sufficiently small relative to $\ell$, then
\beq{eq1}
\lim_{n\to\infty}\Pr(\chic(G_{n,\frac{c}{n}})\in (2+\tfrac{1}{\ell+1}, 2+\tfrac 1 \ell])=e^{-\f_{\ell}(c)}-e^{-\f_{\ell+1}(c)},
\eeq
where 
$$\f_\ell(c)=\sum_{i=1}^{\ell-1} \frac{c^{2i+1}}{2(2i+1)}.$$

We close with two more conjectures.  The first concerns a sort of pseudo-threshold for having a homomorphism to $C_{2\ell+1}$:
\begin{conjecture}
\label{c.sharp}
For any $\ell$, there is a $c_\ell>1$ such that $G_{n,\frac c n}$ has no homomorphism to $C_{2\ell+1}$ for $c>c_\ell$, and has either odd-girth $<2\ell+1$ or has a homomorphism to $C_{2\ell+1}$ for $c<c_\ell$.
\end{conjecture}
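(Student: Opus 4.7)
The plan is to recast Conjecture~\ref{c.sharp} as a sharp threshold statement.  Write $A_\ell$ for the event that $G_{n,c/n}$ admits a homomorphism to $C_{2\ell+1}$, and $B_\ell$ for the event that its odd-girth is less than $2\ell+1$, so $A_\ell$ is monotone decreasing, $B_\ell$ is monotone increasing, and $A_\ell \cap B_\ell = \emptyset$.  Since $\Pr(B_\ell) \to 1-e^{-\f_\ell(c)}$, the conjecture is equivalent to the existence of $c_\ell > 1$ with $\Pr(A_\ell) \to e^{-\f_\ell(c)}$ for $c < c_\ell$ and $\Pr(A_\ell) \to 0$ for $c > c_\ell$---that is, a sharp threshold for $A_\ell$ on the conditional space where short odd cycles are forbidden.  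Theorems~\ref{t.nohom} and~\ref{t.l4} already deliver $\Pr(A_\ell) \to 0$ for large $c$, and Theorem~\ref{t.hom} delivers $\Pr(A_\ell) \to e^{-\f_\ell(c)}$ for $c$ close to $1$; the conjecture asserts that these two regimes meet at a single critical $c_\ell$.

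The natural machinery is Friedgut's sharp threshold theorem applied to the monotone increasing event $\neg A_\ell$.  The usual ``coarse threshold'' obstruction in that framework is a bounded-size local substructure that individually forces the event---here, precisely a short odd cycle, i.e.\ a realization of $B_\ell$.  The key conceptual step would be to prove that, once we condition away $B_\ell$, every obstruction to $C_{2\ell+1}$-colorability requires a substructure of unbounded size, so that the Bourgain version of Friedgut's theorem forces a sharp threshold for $\neg A_\ell$ on this conditional space.

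To extend the constructive side (Theorem~\ref{t.hom}) all the way up to $c_\ell$, I would strengthen its percolation-based construction in two stages: first embed a locally treelike sparse skeleton of the $2$-core of $G_{n,c/n}$ into $C_{2\ell+1}$ essentially as a near-$2$-coloring, then absorb the remaining edges of the core by exploiting the additional wiggle room that $C_{2\ell+1}$ affords over $K_3$.  The technical inputs would be structural control on the $2$-core together with a second-moment calculation for the number of $C_{2\ell+1}$-homomorphisms on the core, conditional on the no-short-odd-cycle event.

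The hard part will be pinning down $c_\ell$ itself.  The same phenomena that block a sharp threshold for ordinary $k$-colorability of $G_{n,c/n}$ at constant average degree---clustering and replica-symmetry-breaking of the solution space and the attendant failure of plain second-moment methods---will be present here too.  A realistic intermediate target is to produce $c_\ell^- \leq c_\ell^+$ with $c_\ell^+ - c_\ell^- = o_\ell(1)$ such that Conjecture~\ref{c.sharp} holds for $c \notin [c_\ell^-,c_\ell^+]$; obtaining the full conjecture at a single sharp $c_\ell$ likely requires tools beyond those presently available in sparse random constraint satisfaction.
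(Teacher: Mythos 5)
This statement is presented in the paper as a \emph{conjecture}, not as a theorem, and the paper offers no proof of it; the authors' own contributions are the positive result near $c=1$ (Theorem~\ref{t.hom}) and the negative results for larger $c$ (Theorems~\ref{t.nohom} and~\ref{t.l4}), and Conjecture~\ref{c.sharp} is precisely the open question of whether these regimes meet at a single critical density. So you should not expect to be able to fill in a complete proof, and to your credit you do not claim one.

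Your roadmap is reasonable as far as it goes, but it has a gap that you only partially acknowledge, and it is worth naming precisely. Friedgut-type sharp threshold theorems (including the Achlioptas--Friedgut result for $k$-colorability and the Bourgain/Friedgut criterion you invoke) establish the existence of a threshold \emph{sequence} $c_\ell(n)$ with a transition window of width $o(1)$; they do \emph{not} by themselves establish that $c_\ell(n)$ converges to a constant $c_\ell$. Conjecture~\ref{c.sharp} as stated asserts a single constant critical value, which is strictly stronger. This is exactly the well-known gap between ``sharp threshold'' and ``threshold constant'' that remains open for $k$-colorability of $G_{n,c/n}$ at small $k$; your proposal inherits it rather than resolving it. In addition, the conditioning-away of short odd cycles is more delicate than the usual Friedgut setup, since $\neg A_\ell$ restricted to the high-odd-girth graphs is no longer a monotone property of a product measure in the standard sense; one would need to verify that the sharp-threshold machinery survives this conditioning (or work with an asymptotically equivalent monotone proxy). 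Your suggestion of a weaker interval-valued intermediate result $[c_\ell^-, c_\ell^+]$ is sensible, but note that for it to be nontrivial one would need to close the sizable gap between the constructive bound (some $c$ very close to $1$, from Theorem~\ref{t.hom}) and the first-moment obstruction (e.g.\ $c>2.774$ or $c\geq 4$ from Theorems~\ref{t.nohom} and~\ref{t.l4}), which the paper leaves wide open.
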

The second asserts that the circular chromatic numbers of random graphs should be dense.
\begin{conjecture}
There are no real numbers $2\leq a<b$ with the property that for any value of $c$, $\Pr(\chic(G_{n,\tfrac c n})\in (a,b))\to 0$.
\end{conjecture}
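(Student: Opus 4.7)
The plan is to reduce the conjecture to a question about sharp thresholds of homomorphism properties, with the main difficulty landing on their distinctness. Recall that $\chic(G)\leq p/q$ if and only if $G$ admits a homomorphism to the circulant $C_{p,q}$, so for each admissible pair $(p,q)$ with $\gcd(p,q)=1$ and $p\geq 2q$, the event $\{\chic(G)\leq p/q\}$ is a monotone decreasing graph property. By Friedgut's sharp threshold theorem for monotone properties in $G_{n,c/n}$, each such event is expected to admit a sharp threshold $c_{p/q}$, modulo the technical requirement that the property not be driven by purely local obstructions (plausible for $C_{p,q}$-colorability away from very short odd-cycle regimes).

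Granting sharp and suitably distinct thresholds $c_{p/q}$, the argument is straightforward. For any interval $(a,b)\subseteq[2,\infty)$, choose admissible rationals $r_1=p_1/q_1<r_2=p_2/q_2$ in $(a,b)$; such rationals exist because the admissible ones are dense. If $c_{r_1}<c_{r_2}$, then for any $c$ strictly between them, $G_{n,c/n}$ with high probability admits no homomorphism to $C_{p_1,q_1}$ yet admits one to $C_{p_2,q_2}$, so $\chic(G_{n,c/n})\in(r_1,r_2]\subseteq(a,b)$ with high probability. Since the conjecture asks only that this probability not tend to $0$, this would be more than sufficient.

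The heart of the difficulty is proving that the thresholds $c_{p/q}$ are strictly increasing on a dense subset of rationals in every sub-interval of $[2,\infty)$. A failure of strict monotonicity on $(a,b)$ would correspond to a discrete jump in the typical value of $\chic(G_{n,c/n})$ as $c$ crosses a critical point --- an analogue for the rational-valued $\chic$ of the sharp integer jumps familiar for the ordinary chromatic number $\chi$. Theorem~\ref{t.hom} together with equation~\eqref{eq1} already supplies positive limit-probability at each $\chic$-value of the form $2+\tfrac{1}{\ell}$, which covers all sufficiently small intervals abutting $2$; one expects analogous phenomena near $c_3$ to cover the vicinity of $3$, and more generally near each $c_k$ for the vicinity of $k$. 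Bridging the middle of $(2,3)$ --- and in particular forcing the thresholds for closely-spaced rationals to be genuinely distinct --- is where the approach meets its main obstacle, and is likely to require substantial new structural insight into $G_{n,c/n}$ at intermediate densities.
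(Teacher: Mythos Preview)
The statement you are addressing is explicitly a \emph{conjecture} in the paper, not a theorem; the paper provides no proof and only remarks that Theorem~\ref{t.hom} confirms the special case $a=2$ (via equation~\eqref{eq1}). There is therefore no proof in the paper against which to compare your proposal.

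Your proposal is not a proof either, and you acknowledge as much: the argument is conditional on (i) Friedgut-type sharp thresholds existing for each $C_{p,q}$-colorability property and (ii) these thresholds being strictly monotone on a dense set of rationals, and you explicitly flag (ii) as the unresolved core. But (i) is already problematic in the regime you cite as settled. For $p/q$ close to $2$, i.e.\ homomorphisms to long odd cycles $C_{2\ell+1}$, the obstruction is precisely local --- the presence of a short odd cycle --- so Friedgut's criterion for a sharp threshold fails, and indeed Theorem~\ref{t.hom} shows that for small $c>1$ the property holds with probability bounded strictly between $0$ and $1$. This is a coarse threshold, not a sharp one, so the ``pick $c$ between two sharp thresholds'' picture breaks down exactly where you claim it succeeds. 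The paper handles $a=2$ not via thresholds at all, but by showing directly that for each $\ell$ the limiting probability in \eqref{eq1} is positive. Extending beyond $a=2$ remains genuinely open, and your sketch does not supply the missing structural input.
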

Note that our Theorem \ref{t.hom} confirms this conjecture for the case $a=2$.
\section{Structure of the paper}
We prove Theorem \ref{t.hom} in Section \ref{secfh}. We first prove
some structural lemmas and then we show, given the properties in these
lemmas, that we can algorithmically find a homomorphism. We prove Theorem
\ref{t.nohom} in Section \ref{ah} by the use of a simple first moment
argument. We prove Theorem \ref{t.l4} in Section \ref{ah5}. This is
again a first moment calculation, but it has required numerical
assistance in its proof.
\section{Finding homomorphisms}\label{secfh}
\begin{lemma}
\label{l.short}
If $\alpha<1/10$ and $c$ is a positive constant where 
$$c<c_0=\exp\set{\frac{1-6\a}{3\a}}$$ 
then
w.h.p. any two cycles of length less than $\alpha \log n$ in $G_{n,p}$,
$p=\frac{c}{n}$, are at distance more than $\alpha \log n$.
\end{lemma}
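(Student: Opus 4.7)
The plan is a first moment (union bound) argument. Call a triple $(C_1,C_2,P)$ a \emph{bad witness} if $C_1,C_2$ are cycles of $G_{n,p}$ each of length less than $\alpha\log n$ and $P$ is a (possibly trivial) path of length at most $\alpha\log n$ joining a vertex of $C_1$ to a vertex of $C_2$. It suffices to show $\E[\#\text{bad witnesses}]=o(1)$, since a bad witness is precisely what the lemma rules out.

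The key structural fact I would exploit is that for any bad witness, the subgraph $H:=C_1\cup C_2\cup P$ has cyclomatic number $2$, hence $|E(H)|=|V(H)|+1$. I would split into cases by the intersection pattern of the two cycles: (a) vertex-disjoint cycles joined by a proper path of length $\ell\ge 1$ (so $v=k_1+k_2+\ell-1$); (b) cycles meeting in a single vertex ($v=k_1+k_2-1$); (c) cycles sharing a path of $s\ge 1$ edges, i.e., a theta graph ($v=k_1+k_2-s-1$). In each case the identity $e=v+1$ is elementary but should be verified.

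Case (a) is dominant. A crude labeled enumeration yields at most $\tfrac14 n^{k_1+k_2+\ell-1}$ witnesses with given parameters, each present in $G_{n,p}$ with probability $p^{k_1+k_2+\ell}$. Summing over $3\le k_1,k_2<\alpha\log n$ and $1\le\ell\le\alpha\log n$ gives
\[
\sum_{k_1,k_2,\ell}\frac{c^{k_1+k_2+\ell}}{4n}\;=\;O\!\left(\frac{c^{3\alpha\log n}}{n}\right)\;=\;O\!\left(n^{3\alpha\log c\,-\,1}\right),
\]
using $c>1$ (which holds because $c_0>1$ when $\alpha<1/6$) to sum the geometric series. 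The hypothesis $c<c_0=\exp\{(1-6\alpha)/(3\alpha)\}$ rewrites as $3\alpha\log c<1-6\alpha$, so the bound is $O(n^{-6\alpha})=o(1)$. Cases (b) and (c) contribute strictly less, since they produce the same (or nearly the same) number of edges on strictly fewer vertices, and are absorbed with room to spare.

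The only real obstacle is careful bookkeeping: verifying the $e=v+1$ identity uniformly across the overlap patterns and confirming that the slack $n^{-6\alpha}$ comfortably swallows the $O((\log n)^3)$ polylog factor arising from the triple sum. No delicate probabilistic estimate is required, and the hypothesis $\alpha<1/10$ only enters to ensure $c_0$ is well above $1$ so that the geometric series estimates apply.
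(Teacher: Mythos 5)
Your proposal is correct, and it takes a genuinely different (and in fact sharper) route than the paper's.

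The paper does not enumerate the explicit witness structures. Instead it observes that two short cycles at small distance force the existence of a vertex set $S$ with $s\leq 3\alpha\log n$ vertices spanning at least $s+1$ edges, and it union-bounds over \emph{all} such sets: the expected number is at most $\sum_s \binom{n}{s}\binom{\binom{s}{2}}{s+1}(c/n)^{s+1}$. This is cleaner because it avoids any case analysis on how the cycles meet, but the crude bound $\binom{\binom{s}{2}}{s+1}\leq (se/2)^{s+1}$ injects an extra $e^{2s}$ factor, which is exactly why the paper's argument requires $c<e^{(1-6\alpha)/(3\alpha)}$ rather than the weaker condition $c<e^{1/(3\alpha)}$ your sharper count would allow. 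Your structure-by-structure enumeration recovers the identity $e=v+1$ exactly and so avoids the spurious exponential factor; the price is that you must classify the overlap patterns and dispatch the degenerate cases (shared vertex, theta graph, and--one case you elided--cycles meeting in two or more disjoint arcs, which give even higher excess and hence even smaller expectations).

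One small correction: you write that the geometric-series step ``uses $c>1$ (which holds because $c_0>1$ when $\alpha<1/6$).'' The hypothesis is $c<c_0$, so $c$ may well be $\leq 1$; in that regime the three-fold sum is simply $O(\log^3 n/n)$ and the conclusion holds trivially. So $c>1$ is not actually needed; your bound $O(n^{3\alpha\log c-1})$ should be read as $O(\log^3 n\cdot n^{\max(3\alpha\log c,0)-1})$, which is $o(1)$ under the stated hypothesis in all cases. Relatedly, $\alpha<1/10$ is not required to ``make the geometric series apply''; it is there (together with $\alpha<1/6$ sufficing for this particular lemma) so that $c_0$ is comfortably above the values of $c$ to which the lemma is later applied.
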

\proofstart
If there are two cycles contradicting the above claim, then there exists a set $S$ of size $s\leq 3\a\log n$ that contains at least $s+1$ edges. The expected number of such sets can be bounded as follows:
\begin{align*}
\sum_{s=4}^{3\a\log n}\binom{n}{s}\binom{\binom{s}{2}}{s+1}\bfrac{c}{n}^{s+1}&\leq 
\sum_{s=4}^{3\a\log n}\bfrac{ne}{s}^s\bfrac{se}{2}^{s+1}\bfrac{c}{n}^{s+1}\\
&\leq \frac{3c\a\log n}{n}\sum_{s=4}^{3\a\log n}\bfrac{ce^2}{2}^s\\
&<\frac{(ce^2)^{3\a\log n}\log n}{n}\\
&=o(1).
\end{align*}
\proofend

Our next lemma is concerned with cycles in $K_2$ which is the
{\em 2-core} of $G_{n,p}$. The 2-core of a graph is the graph induced by the edges that
are in at least one cycle. When $c>1$, the 2-core consists of a linear
size sub-graph together with a few vertex disjoint cycles. By
few we mean that in expectation, there are $O(1)$ vertices on these cycles.

Let $0<x<1$ be such that $xe^{-x}=ce^{-c}$. Then w.h.p. $K_2$ has 
$$\n\sim (1-x)\brac{1-\frac{x}{c}}n\text{ vertices and }\m\sim\brac{1-\frac{x}{c}}^2\frac{cn}{2} \text{ edges}.$$
(See for example Pittel \cite{Pit}).

If $c=1+\e$ for $\e$ small and positive then $x=1-\eta$ where
$\eta=\e+a_1\e^2,\,|a_1|\leq 2$ for $\e< 1/10$.

The degree sequence of $K_2$ can be generated as follows, see for example Aronson, Frieze and Pittel \cite{AFP}: Let $\l$ be
the solution to
$$\frac{\l(e^\l-1)}{e^\l-1-\l}=\frac{2\m}{\n}\sim\frac{c-x}{1-x}=\frac{2+a_1\e}{1+a_1\e}.$$
We deduce from this that
$$\l\leq 4|a_1|\e\leq 8\e.$$
We generate the degrees $d(1),d(2),\ldots,d(\n)$ as independent copies
of the random variable $Z$ where for $d\geq 2$,
$$\Pr(Z=d)=\frac{\l^d}{d!(e^\l-1-\l)}.$$
We condition that the sum $D_1=d(1)+d(2)+\cdots+d(n)=2\m$. We let 
\begin{align*}
\th_k&=\frac{\Pr(d(i)=d_i,i=1,2,\ldots,k\mid
  D_1=2\m)}{\Pr(d(i)=d_i,i=1,2,\ldots,k)}\\
&=\frac{\Pr(d(k+1)+\cdots+d(n)=2\m-(d_1+\cdots+d_k)}{\Pr(d(1)+\cdots+d(n)=2\m)}.
\end{align*}
It is shown in \cite{AFP} that if $Z_1,Z_2,\ldots,Z_N$ are independent
copies of $Z$ then 

\beq{Z=}
\Pr(Z_1+\cdots+Z_N=N\E(Z)-t)=\frac{1}{\s\sqrt{2\p
    N}}\brac{1+O\bfrac{t^2+1}{N\s^2}}
\eeq
where $\s^2=\Theta(1)$ is the variance of $Z$.

We observe next that the maximum degree in $G_{n,p}$ and hence in
$K_2$ is q.s.\footnote{A sequence of events $\cE_n$ is said to occur
  {\em quite surely} q.s. if $\Pr(\neg\cE_n)=O(n^{-C})$ for any
  constant $C>0$.} at most $\log n$. It follows from this and
\eqref{Z=} that 
$$\th_k=1+o(1)\text{ for }k\leq \log^2n\text{ and
}\th_k=O(n^{1/2})\text{ in general}.$$
\begin{lemma}
\label{l.long}
For any $\alpha,\beta$, there exists $c_0>1$ such that w.h.p. any cycle of length greater than $\alpha\log n$ in the 2-core of $G_{n,p}$, $p=\frac{c}{n}$, $1<c<c_0$, has at most $\beta \log n$ vertices of degree $\geq 3$.
\end{lemma}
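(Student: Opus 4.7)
The plan is a first-moment argument in the configuration model on the degree sequence of $K_2$, which by \eqref{Z=} and the contiguity recalled before the lemma captures subgraph counts of the $2$-core up to $1+o(1)$ factors. Let $M_{L,k}$ denote the number of cycles of length $L$ in this model with exactly $k$ vertices of degree $\geq 3$. By Markov's inequality, it suffices to prove
\[
\sum_{L > \a\log n}\,\sum_{k > \b\log n}\E[M_{L,k}] = o(1),
\]
for $c_0$ (equivalently $\l_0 = \l(c_0)$) chosen sufficiently small as a function of $\a$ and $\b$.

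The standard configuration-model subgraph count, after unpacking the half-edge pairing and using the q.s.\ maximum-degree bound $\Delta \leq \log n$, yields
\[
\E[M_{L,k}] \leq \frac{1+o(1)}{2L}\binom{L}{k}\left(\frac{Q_{\geq 3}}{N}\right)^{\!k}\left(\frac{Q_2}{N}\right)^{\!L-k},
\]
where $N = 2\m$, $Q_2 = 2\n_2$, and $Q_{\geq 3} = \sum_{d_i \geq 3} d_i(d_i-1)$. A direct computation from the p.m.f.\ $\Pr(Z=d) = \l^d/(d!(e^\l - 1 - \l))$ gives $Q_{\geq 3}/N \to \l$ and $Q_2/N \to 1 - \l/2$. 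Since $\l = \l(c) \to 0$ as $c \to 1^+$, one may make $\l$ arbitrarily small by taking $c_0$ close to $1$.

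The main obstacle is the combinatorial bound on the double sum. For fixed $k = \b\log n$ the summand $\binom{L}{k}\l^k(1-\l/2)^{L-k}$ is concentrated around $L^* = 2k/\l$, which for small $\l$ lies above $\a\log n$; so the length restriction alone does not suppress the bulk. The argument must combine a Chernoff-type tail bound on the implicit $\Bin(L, \l/(1+\l/2))$ for $L \ll L^*$, the exponential factor $(1-\l/2)^{L-k}$ for $L \gg L^*$, and a Laplace-type estimate together with the $1/(2L)$ prefactor near $L^*$. Tuning $\l$ small enough as a function of $\a$ and $\b$ then makes each regime contribute $o(1/\log n)$ uniformly in $k$, and summation over $k > \b\log n$ yields the required $o(1)$ total.
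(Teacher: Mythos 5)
Your mean formula for $M_{L,k}$ is in the right spirit (the computation of $Q_{\geq3}/N\to\l$ and $Q_2/N\to\l/(e^\l-1)\approx 1-\l/2$ is correct), but the double sum you propose to bound does not go to zero, and no choice of small $\l$ can rescue it. Write $q=\l/(e^\l-1-\l+\l)\cdot\ldots$ — more simply, normalize:
\[
\E[M_{L,k}]\approx\frac{1}{2L}\binom{L}{k}\l^k\lfrac{\l}{e^\l-1}^{L-k}=\frac{1}{2L}\lfrac{\l e^\l}{e^\l-1}^{L}\Pr\bigl(\Bin(L,q)=k\bigr),\qquad q=\frac{\l(e^\l-1)}{\l e^\l}\cdot\l\Big/\ldots
\]
the exact form of $q$ is not important; what matters is that $\tfrac{\l e^\l}{e^\l-1}=1+\tfrac\l2+O(\l^2)>1$, so summing over $k$ gives $\sum_k\E[M_{L,k}]\approx\tfrac{1}{2L}(1+\l/2)^L$, which is exponentially large in $L$. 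Moreover the binomial has mean $\approx\l L$, so for $L=\Theta(n)$ essentially all of that mass sits at $k\approx\l L\gg\b\log n$. Hence $\sum_{L>\a\log n}\sum_{k>\b\log n}\E[M_{L,k}]$ is exponentially large in $n$, not $o(1)$, and this is not merely a failure of the first-moment method: for $c>1$ the $2$-core does contain cycles of length $\Theta(n)$, and such cycles carry $\Theta(n)$ vertices of degree $\geq3$, so the event you are trying to rule out actually occurs w.h.p. The difficulty you flag (``the length restriction alone does not suppress the bulk'') is the symptom of a false target, not a technical nuisance to be finessed with a Laplace/Chernoff analysis.

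The statement as printed is a misstatement: the paper's proof (and the way Lemma~\ref{l.decomp} invokes it, with $\b<1/(2k)$ independent of $\a$) shows that w.h.p.\ no cycle $C$ with $|C|\geq\a\log n$ has more than $\b|C|$ vertices of degree $\geq3$ --- a bound on the \emph{fraction}, not the absolute number. Under that reading your plan works and matches the paper: with $k=\b L$ the summand becomes
\[
\frac{1}{2L}\lfrac{\l e^\l}{e^\l-1}^{L}\Pr\bigl(\Bin(L,q)\geq\b L\bigr)\leq\frac{1}{2L}\exp\Bigl(L\bigl[\log(1+\l/2)-D(\b\,\|\,q)\bigr]\Bigr),
\]
and since $q\approx\l\to0$ makes $D(\b\|q)\to\infty$ while $\log(1+\l/2)\to0$, the base is $<1$ for $\l$ small, giving geometric decay in $L$. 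The paper carries out exactly this computation (arriving at the base $e\l^\b(e/\b)^\b e^\l$) without explicitly invoking Chernoff. So: replace ``$k>\b\log n$'' with ``$k\geq\b L$'' throughout, drop the hoped-for $o(1/\log n)$-per-regime claim, and your argument becomes the paper's.
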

\proofstart
Suppose that
$$e^{1+8\e}\bfrac{8\e e}{\b}^\b<1. $$
We will show then that w.h.p. the $K_2$ does not contain a cycle $C$ where (i) $|C|\geq \a\log n$ and (ii) $C$ contains $\b|C|$ vertices of degree greater than two.

We can bound the probability of the existence
of a ``bad'' cycle $C$ as follows: In the following display we choose the vertices of  our cycle in $\binom{\n}{k}$ ways and then arrange these vertices in a cycle $C$ in $(k-1)!/2$ ways. Then we choose $\b k$ vertices to have degree at least three. We then sum over possible degree sequences for the vertices in $C$. This explains the factor $\th_k\prod_{i=1}^k\frac{\l^{d_i}}{d_i!(e^\l-1-\l)}$. We now resort to using the configuration model of Bollob\'as \cite{Boll}. This would explain the product $\prod_{i=1}^k\frac{d_i(d_i-1)}{2\m-2i+1}$. We use the denominator $2\m-k$ to simplify the calculation. The configuration model computation will inflate our estimate by a constant factor that we hide with the notation $\leq_b$. We write $A\leq_b B$ for $A=O(B)$ when $O(B)$ is ``ugly looking''. 
\begin{align*}
\Pr(\exists C)&\leq_b \sum_{k=\a\log n}^\n\binom{\n}{k}\frac{(k-1)!}{2}\binom{k}{\b
  k}\th_k\sum_{\substack{d_1,\ldots,d_{\b k}\geq3\\d_{\b k+1},\ldots,d_k\geq
    2}}\prod_{i=1}^k\brac{\frac{\l^{d_i}}{d_i!(e^\l-1-\l)}\cdot\frac{d_i(d_i-1)}{2\m-2k}}\\
&\leq \sum_{k=\a\log n}^\n\frac{1}{2k}\bfrac{\n}{(2\m-2k)(e^\l-1-\l)}^k\l^{2k}\binom{k}{\b
  k}\th_k\sum_{\substack{d_1,\ldots,d_{\b k}\geq3\\d_{\b
      k+1},\ldots,d_k\geq
    2}}\prod_{i=1}^k\frac{1}{(d_i-2)!}\\
&\leq\sum_{k=\a\log n}^\n\frac{e^{k^2/\m}}{2k}\bfrac{\n}{2\m(e^\l-1-\l)}^k\l^{2k}\binom{k}{\b
  k}\th_k(e^\l-1)^{\b k}e^{(1-\b)k\l}\\
&=\sum_{k=\a\log n}^\n\frac{e^{k^2/\m}}{2k}\bfrac{\l}{e^\l-1}^k\binom{k}{\b
  k}\th_k(e^\l-1)^{\b k}e^{(1-\b)k\l}\\
&\leq\sum_{k=\a\log n}^\n\frac{\th_k}{2k}\brac{e^{k/\m}\cdot\frac{\l}{(e^\l-1)^{1-\b}}\cdot\bfrac{e}{\b}^\b\cdot
  e^{(1-\b)\l}}^k\\
&\leq \sum_{k=\a\log n}^\n\frac{\th_k}{2k}\brac{e\cdot\l^\b\cdot\bfrac{e}{\b}^\b\cdot
  e^\l}^k\\
&=o(1).
\end{align*}
\proofend

\begin{lemma}
\label{l.decomp}
For any $\alpha$ and any $k\in \N$, there exists $\ep_0>0$ such that w.h.p. we can decompose the edges of the $G=G_{n,p}$, $p=\frac{1+\ep}{n}$, $0<\ep<\ep_0$, as $F\cup M$, where $F$ is a forest, and where the distance in $F$ between any two edges in $M$ is at least $k$.
\end{lemma}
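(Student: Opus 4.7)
The plan is to construct $F$ and $M$ using the 2-core $K_2$ of $G$ together with its kernel, placing each $M$-edge either on an isolated short cycle or in the interior of a sufficiently long subdivision path. Choose parameters $\alpha<1/10$ (for Lemma \ref{l.short}) and $\beta>0$ small enough that $\alpha>(2k+2)\beta$; applying Lemmas \ref{l.short} and \ref{l.long} with these parameters determines the required $\ep_0$.

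First I would handle the short cycles of $G$ (those of length less than $\alpha\log n$). By Lemma \ref{l.short} these cycles are at pairwise $G$-distance exceeding $\alpha\log n$; pick an arbitrary edge from each and place it into a set $M_1$. Now let $G_1:=G\setminus M_1$ and let $K_2'$ denote its $2$-core; every cycle of $K_2'$ has length at least $\alpha\log n$. Form the kernel $K^*$ by contracting each maximal degree-$2$ path of $K_2'$ into a single edge, labelling each edge of $K^*$ by the length of its subdivision path. Because any cycle in $K^*$ corresponds to a cycle of $K_2'$ of length at least $\alpha\log n$, Lemma \ref{l.long} (applied in $K_2\supseteq K_2'$) gives that every cycle of $K^*$ uses at most $\beta\log n$ edges.

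The key structural fact is then that the edges of $K^*$ of length less than $2k+2$ form a forest. Otherwise a cycle of such short edges would have at most $\beta\log n$ of them, giving a $K_2'$-cycle of length less than $(2k+2)\beta\log n<\alpha\log n$, which contradicts the fact that all $K_2'$-cycles are long. Granted this, extend the short-edge forest to a spanning tree $T^*$ of $K^*$, so that every non-tree edge of $T^*$ has length at least $2k+2$. Lift $T^*$ to $K_2'$ by including each $T^*$-edge's subdivision path in full, and each non-$T^*$-edge's subdivision path with its middle edge deleted; let $M_2$ be the set of omitted middle edges and set $M:=M_1\cup M_2$, $F:=G\setminus M$, which is a spanning forest of $G$ (since $M_1$ breaks every short cycle, $M_2$ is a feedback edge set of $K_2'$, and all cycles of $G_1$ lie in $K_2'$).

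To verify the distance condition, two $M_1$-edges lie on distinct short cycles at $G$-distance greater than $\alpha\log n$, hence at $F$-distance greater than $k$. Two $M_2$-edges each lie at least $k$ steps in $F$ from either endpoint of their respective long subdivision paths (this is the point of placing them in the middle), so any $F$-path between them has length at least $2k$. An $M_1$-edge paired with an $M_2$-edge is treated analogously, since the $M_2$-edge is at $F$-distance at least $k$ from every vertex not in its own subdivision path, and the $M_1$-edge is not in that path. The main obstacle is the structural claim that short $K^*$-edges form a forest: this hinges on the joint use of Lemmas \ref{l.short} and \ref{l.long} with the parameter inequality $\alpha>(2k+2)\beta$, and a small amount of care is needed because $K_2'$ need not equal $K_2\setminus M_1$ (removing $M_1$ can strip additional vertices from the 2-core), so Lemma \ref{l.long} is applied to $K_2$ and inherited by $K_2'$.
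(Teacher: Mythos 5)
Your strategy shares the paper's two key ingredients (Lemma~\ref{l.short} to separate short cycles and Lemma~\ref{l.long} to find long degree-$2$ stretches on long cycles), but you reverse the order in which they are applied and pass through the kernel of the $2$-core. The order matters, and this is where the argument has a gap.

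The problematic step is the claim that "the $M_2$-edge is at $F$-distance at least $k$ from every vertex not in its own subdivision path." A subdivision path $P$ of $K_2'$ has interior vertices of degree $2$ \emph{in $K_2'$}, but such a vertex $w$ may well have degree $\geq 3$ in the original $2$-core $K_2$, with a pendant tree of $G_1=G\setminus M_1$ hanging off it. That pendant tree can be precisely the remnant of a short cycle broken by $M_1$. Concretely, suppose a $4$-cycle $C=u\!-\!v\!-\!x\!-\!y\!-\!u$ is joined to the interior of a long degree-$2$ arc of $K_2$ by a bridge $y\!-\!w$, and $uv$ is its chosen $M_1$-edge. In $K_2'$, the vertices $u,v,x,y$ are peeled away and $w$ becomes a degree-$2$ vertex lying in the interior of some subdivision path $P$, with $u$ at $F$-distance $2$ from $w$ via $u\!-\!y\!-\!w$. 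Nothing in Lemma~\ref{l.short} (which only separates \emph{pairs of short cycles}) or Lemma~\ref{l.long} (which bounds the total number of degree-$\geq 3$-in-$K_2$ vertices on any one long cycle, but says nothing about their positions) prevents $w$ from sitting within $k-1$ steps of the middle of a non-tree subdivision path. In that case the $M_1$-edge $uv$ and the $M_2$-edge placed at the middle of $P$ are at $F$-distance less than $k$, and the decomposition fails.

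The paper avoids this by reversing your two steps: it first locates, inside every long cycle, a path whose interior is degree-$2$ \emph{in $K_2$} (so that, as the only way to enter the interior is along the path itself, no piece of the $2$-core --- in particular no short cycle --- can be attached to its interior), and places the middle edges of these paths into $M$ first. Only then does it break the remaining (short) cycles, which automatically avoid those interiors and hence are at $F$-distance at least $k$ from the middles. Your proof could probably be repaired, e.g.\ by choosing the $M_2$-edge within a stretch of $P$ whose vertices all have degree $2$ in $K_2$ (not merely in $K_2'$), but that requires an additional argument that such a stretch of length $\geq 2k+1$ exists, which the current write-up does not supply.
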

\newcommand{\ppp}{\mathcal{P}}
\proofstart
By choosing $\beta<\frac 1 {2k}$ in Lemma \ref{l.long} we can find, in every cycle of length $>\alpha \log n$ of the $2$-core $K_2$ of $G$ (which includes all cycles of $G$), a path of length at least $2k+1$ whose interior vertices are all of degree 2.  We can thus choose in each cycle of $K_2$ of length $>\alpha \log n$ such a path of maximum length, and let $\ppp$ denote the set of such paths.  (Note that, in general, there will be fewer paths in $\ppp$ than long cycles in $K_2$ due to duplicates, but that the elements of $\ppp$ are nevertheless disjoint paths in $K_2$.)  We now choose from each path in $\ppp$ an edge from the center of the path to give a set $M_1$.  Note that the set of cycles in $G\stm M_1$ is the same as the set of cycles in $G\stm \bigcup_{P\in \ppp} P$. (In particular, the only cycles which remain have length $\leq \alpha \log n$ and are at distance $\geq k$ from $M$.)  Thus, letting $M_2$ consist of one edge from each cycle of $G\stm M_1$, Lemma \ref{l.short} implies that $M=M_1\cup M_2$ is as desired.
\proofend

\begin{proof}[Proof of Theorem \ref{t.hom}]
Our goal in this section is to give a $C_{2\ell+1}$-coloring of $G=G_{n,\frac{1+\ep}{n}}$ for $\ep>0$ sufficiently small.  By this we will mean an assignment $c:V(G)\to \{0,1,\dots,2\ell\}$ such that $x\sim y$ in $G$ implies that $c(x)\sim c(y)$ as vertices of $C_{2\ell+1}$; that is, that $x=y\pm 1\pmod {2\ell+1}$.

Consider a decomposition of $G$ as $F\cup M$ as given by Lemma \ref{l.decomp}, with $k=4\ell-2$.

We begin by 2-coloring $F$. Let $c_F:V\to \{0,1\}$ be such a coloring.  Our goal will be to modify this coloring to give a good $C_{2\ell+1}$ coloring of $S$.

Let $\bbb$ be the set of edges $xy\in M$ for which $c_F(x)=c_F(y)$, and let $B$ be a set of distinct representatives for $\bbb$, and for $i=0,1$, let $B^i=\{v\in B \st c_F(v)=i\}$.

We now define a new $C_{2\ell+1}$ coloring $c:V\to \{0,1,\ldots,2\ell\}$, by
\begin{equation}\label{qw}
c(v)=
\begin{cases}
c_F(v) & \mbox{if }\dist_F(v,B)\geq 2\ell-1\\
c_F(x)-(-1)^j(\dist_F(x,v)+1) & {\mbox{if } \exists x\in B^j \mbox{ s.t. } \dist(x,v)_F<2\ell-1.}\\
\end{cases}
\end{equation}
(Color addition and subtraction are computed modulo $2\ell+1$.)

Since edges in $M$ are separated by distances $\geq 4\ell-2$, this coloring is well-defined (i.e., there is at most one choice for $x$).  Moreover, $c$ is certainly a good $C_{2\ell+1}$-coloring of $F$.  Thus if $c$ is a not a good $C_{2\ell+1}$-coloring of $S$, it is bad along some edge $xy\in M.$  But if such an edge was already properly colored in the 2-coloring $c_F$, it is still properly colored by $c$, since it has distance $\geq 4\ell-2\geq 2\ell-1$ from other edges in $M$.  On the other hand, if previously we had $c_F(x)=c_F(y)=i$, and WLOG $x\in B^i$, then the definition of $c(v)$ gives that we now have that $c(x)\in\set{i-1,i+1}$ (modulo $2\ell-1$).  Thus if $c$ is not a good $C_{2\ell+1}$-coloring of $S$, then there is an edge $xy\in M$ such that $x\in B^i$ and $y$'s color also changes in the coloring $c$; but by the distance between edges in $M$, this can only happen if $x$ and $y$ are at $F$-distance $<2\ell-1$.  Note also that $c_F(x)=c_F(y)$ implies that $\dist_F(x,y)$ is even.  Thus in this case, $F\cup \{xy\}$ contains an odd cycle of length $\leq 2\ell-1$, and so $G$ has odd girth $<2\ell+1$, as desired.
\end{proof}

\section{Avoiding homomorphisms to long odd cycles}\label{ah}
For large $\ell$, one can prove the non-existence of homomorphisms to $C_{2\ell+1}$ using the following simple observation:
\begin{observation}
If $G$ has a homomorphism to $C_{2\ell+1}$, then $G$ has an induced bipartite subgraph with at least $\frac{2\ell}{2\ell+1} |V(G)|$ vertices.
\end{observation}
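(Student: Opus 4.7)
The plan is to exploit the fact that deleting a single vertex from $C_{2\ell+1}$ leaves a path, which is bipartite, and then to use an averaging argument over the $2\ell+1$ possible deletions to find one that removes few vertices of $G$.

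More concretely, fix a homomorphism $h\colon V(G)\to V(C_{2\ell+1})=\{0,1,\dots,2\ell\}$, and for each $i$ let $V_i=h^{-1}(i)$. The first step is to observe that for any choice of $i$, the induced subgraph $G[V(G)\setminus V_i]$ admits a homomorphism into $C_{2\ell+1}-\{i\}$ (namely the restriction of $h$), and $C_{2\ell+1}-\{i\}$ is a path on $2\ell$ vertices, which is bipartite. Since admitting a homomorphism into a bipartite graph forces bipartiteness, $G[V(G)\setminus V_i]$ is an induced bipartite subgraph of $G$.

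The second step is the averaging: the classes $V_0,V_1,\dots,V_{2\ell}$ partition $V(G)$, so $\sum_{i=0}^{2\ell}|V_i|=|V(G)|$ and there exists some $i^*$ with $|V_{i^*}|\leq \tfrac{|V(G)|}{2\ell+1}$. Then $G[V(G)\setminus V_{i^*}]$ is an induced bipartite subgraph on at least $\tfrac{2\ell}{2\ell+1}|V(G)|$ vertices, as required.

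There is really no obstacle here; the argument is a one-line observation once one notices that a vertex-deleted odd cycle is a path. The only mild subtlety is just to record that a graph admitting a homomorphism to a bipartite graph is itself bipartite (equivalently, has no odd closed walk, since a homomorphism preserves walks), which justifies the first step above.
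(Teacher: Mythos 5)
Your argument is correct and is exactly the paper's proof (``Delete the smallest color class''), just with the averaging step and the fact that a vertex-deleted odd cycle is a bipartite path spelled out explicitly.
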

\begin{proof}Delete the smallest color class.\end{proof}
\begin{proof}[Proof of Theorem \ref{t.nohom}]
The probability that $G_{n,\frac c n}$ has an induced bipartite subgraph on $\beta n$ vertices is at most
\begin{equation}\label{e.bipartite}
\binom{n}{\beta n}2^{\beta n}\brac{1-\frac{c}{n}}^{\beta^2 n^2/4}<\left(\frac{2^\beta e^{-c\beta^2/4}}{\beta^\beta(1-\beta)^{1-\beta}}\right)^n
\end{equation}
The expression inside the parentheses is unimodal in $\beta$ for fixed $c$, and, for $c>2.774$, is less than 1 for $\beta>.999971$.  In particular, for $c>2.774$, $G_{n,\frac c n}$ has no homomorphism to $C_{2\ell+1}$ for $2\ell+1\geq 1,427,583$.  
\end{proof}
\section{Avoiding homomorphisms to $C_5$}\label{ah5}
A homomorphism of $G=G_{n,p},p=\frac c n$ into $C_5$ induces a partition
of $[n]$ into sets $V_i,i=0,1,\ldots,4$. This partition can be assumed
to have the following properties:
\begin{enumerate}[{\bf P1}]
\item The sets $V_i,i=0,1,\ldots,4$ are all independent sets.
\item There are no edges between $V_i$ and $V_{i+2}\cup V_{i-2}$. Here
  addition and subtraction in an index are taken to be modulo 5.
\item Every $v\in V_i,i=1,2,3,4$ has a neighbor in $V_{i-1}$. 
\item Every $v\in V_2$ has a neighbor in $V_3$.
\end{enumerate}
Hatami \cite{Hat}, Lemma 2.1 shows that we can assume {\bf
  P1,P2,P3}. Given {\bf  P1,P2,P3}, if $v\in V_2$ has no neighbors in $V_3$ then we can move $v$ from
  from $V_2$ to $V_0$ and still have a homomorphism. Furthermore, this
  move does not upset {\bf P1,P2,P3}.
 
We let $|V_i|=n_i$ for $i=0,1,\ldots,4$. For a fixed
partition we then have
\beq{P12}
\Pr({\bf P1\wedge P2})=(1-p)^S\text{ where }S=\binom{n}{2}-\sum_{i=0}^4n_in_{i+1}.
\eeq
\beq{P3}
\Pr({\bf P3\mid P1\wedge P2})=\prod_{i=1}^4(1-(1-p)^{n_{i-1}})^{n_i}.
\eeq
\beq{P4}
\Pr({\bf P4\mid P1\wedge P2\wedge P3})\leq \brac{1-\brac{1-\frac{1}{n_2}}^{n_3}(1-p)^{n_3}}^{n_2}
\eeq
Equations \eqref{P12} and \eqref{P3} are self evident, but we need to
justify \eqref{P4}. Consider the bipartite subgraph $\G$ of $G_{n,p}$ induced by
$V_2\cup V_3$. {\bf P3} tells us that each $v\in V_3$ has a neighbor
  in $V_2$. Denote this event by $\cA$. Suppose now that we choose a random mapping $\f$ from
  $V_3$ to $V_2$. We then create a bipartite graph $\G'$ with edge set
  $E_1\cup E_2$. Here $E_1=\set{xy:x\in V_3,y=\f(x)}$ and $E_2$ is
  obtained by independently including each of the $n_2n_3$ possible edges between
  $V_2$ and $V_3$ with probability $p$. We now claim that we can
  couple $\G,\G'$ so that $\G\subseteq \G'$.

Event $\cA$ can be construed as follows: A vertex in $v\in V_3$
chooses $B_v$ neighbors in $V_2$ where $B_v$ is distributed as a
binomial $Bin(n_2,p)$, conditioned to be at least one. The neighbors
of $v$ in $V_2$ will then be a random $B_v$ subset of $V_2$. We only
have to prove then that if $v$ chooses $B_v'$ random neighbors in
$\G'$ then $B_v'$ stochastically dominates $B_v$. But $B_v'$ is one
plus $Bin(n_2-1,p)$ and domination is easy to confirm. We have $n_2-1$
instead of $n_2$, since we do not wish to count the edge $v$ to
$\f(v)$ twice.

We now write $n_i=\a_in$ for $i=0,\ldots,4$. We are particularly
interested in the case where $c=4$. Now \eqref{e.bipartite} implies
that $G_{n,\frac 4 n}$ has no induced bipartite subgraph of size
$\beta n$ for $\beta >0.94$. Thus we may assume that
$\alpha_i\geq0.06$ for $i=0,\ldots,4$. In which case we can write
\begin{multline*}
\Pr({\bf P1\wedge P2\wedge P3\wedge P4})\leq e^{o(n)}\times
\exp\set{-c\brac{\frac 1 2-\sum_{i=0}^4\a_i\a_{i+1}}n}\times
\brac{\prod_{i=1}^4(1-e^{-c\a_{i-1}})^{\a_i}}^n\times\\
(1-e^{-\a_3/\a_2}e^{-c\a_3})^{\a_2n}.
\end{multline*}
The number of choices for $V_0,\ldots,V_4$ with these sizes is
$$\binom{n}{n_0,n_1,n_2,n_3,n_4}=e^{o(n)}\times
\bfrac{1}{\prod_{i=0}^4\a_i^{\a_i}}^n\leq 5^n.$$
Putting $\al_4=1-\al_0-\al_1-\al_2-\al_3$ and
\begin{multline*}
b=b(c,\al_0,\al_1,\al_2,\al_3)=
\frac{1}{ {\al_0}^{\al_0} {\al_1}^{\al_1} {\al_2}^{\al_2} {\al_3}^{\al_3} {\al_4}^{\al_4}}\\
e^{c(\al_0\al_4-\frac 1 2)}  (e^{c \al_0}-1)^{\al_1} (e^{c \al_1}-1)^{\al_2} (e^{c \al_2}-1)^{\al_3} (e^{c \al_3}-1)^{\al_4} (1-e^{-\al_3/\al_2}e^{-c \al_3})^{\al_2},
\end{multline*}
we see that since there are $O(n^4)$ choices for $n_0,\ldots,n_4$ we have
\beq{bound}
\Pr(\exists \text{ a homomorphism from $G_{n,\frac4n}$ to $C_5$})\leq
  e^{o(n)}\brac{\max_{\substack{\a_0+\cdots+\a_3\leq
      0.94\\\a_0,\ldots,\a_3\geq
      0.06}}b(4,\al_0,\al_1,\al_2,\al_3)}^n.
\eeq
In the next section, we describe a numerical procedure for verifying
that the maximum in \eqref{bound} is less than 1. This will
complete the proof of Theorem \ref{t.l4}.

\section{Bounding the function.}
Our aim now is to bound the partial derivatives of
$b(4.0,\al_0,\al_1,\al_2,\al_3)$, to translate numerical computations
of the function on a grid to a rigorous upper bound.  

Before doing this we verify that w.h.p. $G_{n,p=\frac4 n}$ has no
independent set $S$ of size $s=3n/5$ or more. 
Indeed, 
$$\Pr(\exists S)\leq 2^n(1-p)^{\binom{s}{2}}\leq 2^ne^{-18n/25}e^{12/5}=o(1).$$
In the calculations below we will make use of the following bounds:
They assume that $0.06\leq \a_i\leq 0.6$ for $i\geq 0$.
$$\log(\a_i)>-2.82;\quad-1.31<\log(e^{4\a_i}-1)<2.31;\quad\frac{e^{4\a_i}}{e^{4\a_i}-1}<4.69$$
$$\frac{1}{e^{4\a_i}-1}<3.69;\quad\log(e^{\al_3/\al_2+4\al_3}-1)>-0.91;\quad\frac{1+4\al_2}{e^{\al_3/\al_2}e^{4\al_3}-1}<8.40.$$
We now use these estimates to bound the absolute values of the
$\frac{1}{b}\cdot\frac {\partial b}{\partial \al_i}$.
Our target value for these is 30. We will be well within these bounds
except for $i=2$

Taking logarithms to differentiate with respect to $\al_0$, we find 
\begin{multline}
\frac {\partial b}{\partial \al_0}=b(c,\al_0,\al_1,\al_2,\al_3)\times \\
\left(c\left(-\al_0+\al_1+\frac{\al_1}{e^{\al_0 c}-1}+\al_4\right) -\log (\al_0)+\log (\al_4)-\log(e^{\al_3 c}-1)\right).
\end{multline}
In particular, for $c=4$,
\begin{align*}
\frac{1}{b}\cdot\frac {\partial b}{\partial \al_0}&\geq-4\a_0+\log(\a_4)-\log(e^{4\a_3}-1)>-2.4-2.82-2.31,\\
\frac{1}{b}\cdot\frac {\partial b}{\partial
  \al_0}&\leq4\brac{\a_1+\frac{\al_1}{e^{\al_0
      c}-1}+\al_4}-\log(\a_0)-\log(e^{4\a_3}-1)<4\times 4.69+2.82+1.31.
\end{align*}
Similarly, we find
\begin{multline}
\frac {\partial b}{\partial \al_1}=b(c,\al_0,\al_1,\al_2,\al_3)\times\\
\left(c\left(-\al_0+\al_2+\frac{\al_2}{e^{\al_1 c}-1}\right) -\log (\al_1)+\log (\al_4)+\log\bfrac{e^{\al_0 c}-1}{e^{\al_3 c}-1}\right),
\end{multline}
and so for $c=4$,
\begin{align*}
\frac{1}{b}\cdot\frac {\partial b}{\partial \al_1}&\geq-4\a_0+\log(\a_4)+\log(e^{4\a_0}-1)-\log(e^{4\a_3}-1)>-2.4-2.82-3.62,\\
\frac{1}{b}\cdot\frac {\partial b}{\partial \al_1}&\leq4\brac{\a_2+\frac{\al_2}{e^{4\al_1}-1}}-\log(\a_1)-\log(e^{4\a_3}-1)<2.4\times 4.69+2.82+1.31.
\end{align*}
We next find that 
\begin{multline}
\frac {\partial b}{\partial \al_2}=b(c,\al_0,\al_1,\al_2,\al_3)\times\\
c\left(-\al_0+\al_3+\frac{\al_3}{e^{\al_2 c}-1}\right)-\frac{\al_3/\al_2}{e^{\al_3/\al_2+c\al_3}-1}+\\\log\al_4-\log \al_2 +\log(e^{\al_1 c}-1)-\log(e^{\al_3 c}-1)-\frac {\al_3}{\al_2}-c\al_3-\log(e^{\al_3/\al_2+c \al_3}-1);
\old{=
\log\left(\frac{\al_4\al_1 }{\al_2 \al_3^2 c + \al_3^2}\right)-\frac{\al_3/\al_2}{e^{\al_3/\al_2+c\al_3}-1}
+\ee(c(K+1))}
\end{multline}
and so for $c=4$,
\begin{align*}
\frac{1}{b}\cdot\frac {\partial b}{\partial \al_2}\geq&-4\a_0-\frac{\al_3}{\al_2}\frac{e^{\al_3/\al_2+c\al_3}}{e^{\al_3/\al_2+c\al_3}-1}-\log(e^{\al_3/\al_2+c \al_3}-1)+\log(\a_4)+\log\bfrac{e^{4\a_1}-1}{e^{4\a_3}-1}\\
\noalign{We need to be a little careful here. Now $\a_3/\a_2\leq 10$
  and if  $\a_3/\a_2\geq 9$ then $\a_3\geq 0.54$ and then $\a_i\leq
  0.46-3\times.06=0.28$ for $i\neq 3$. We bound
  $-\frac{1}{b}\cdot\frac {\partial b}{\partial \al_i}$ for both
  possibilities. Continuing we get}
\frac{\a_3}{\a_2}\geq 9:&\frac{1}{b}\cdot\frac {\partial b}{\partial \al_2}>-1.12-10.01-12.4-2.82-3.62=-29.97,\\
\frac{\a_3}{\a_2}\leq 9:&\frac{1}{b}\cdot\frac {\partial b}{\partial \al_2}>-2.4-9.01-11.4-2.82-3.62,\\
\frac{1}{b}\cdot\frac {\partial b}{\partial \al_2}\leq&4\brac{\a_3+\frac{\al_3}{e^{4\al_2}-1}}-\log(\a_2)+\log\bfrac{e^{4\a_1}-1}{e^{4\a_3}-1}-\log(e^{\al_3/\al_2+c \al_3}-1)\\
<&2.4\times 3.69+2.82+3.62+0.91.
\end{align*}
Finally, we find that 
\begin{multline}
\frac {\partial b}{\partial \al_3}=b(c,\al_0,\al_1,\al_2,\al_3)\times\\
c\left(-\al_0+\al_4\frac{e^{c \al_3}}{e^{c \al_3}-1}\right)+\frac{1+c \al_2}{e^{\al_3/\al_2}e^{c \al_3}-1}+\log(\al_4)-\log(\al_3)+\log\bfrac{e^{\al_2 c}-1}{e^{\al_3 c}-1}
\end{multline}
and so for $c=4$
\begin{align*}
\frac1b\cdot\frac {\partial b}{\partial \al_3}&\geq-4\a_0+\log(\a_4) +\log(e^{4\al_2}-1)-\log(e^{4\al_3}-1)>-2.4-2.82-3.62,\\
\frac1b\cdot\frac {\partial b}{\partial \al_3}&\leq 4\al_4\frac{e^{4\al_3}}{e^{4\al_3}-1}+\frac{1+4\al_2}{e^{\al_3/\al_2}e^{4 \al_3}-1}-\log(\a_3)+\log\bfrac{e^{4\al_2}-1}{e^{4\al_3}-1}\\
&<2.4\times 4.69+8.40+2.82+3.62.
\end{align*}
\newcommand{\bal}{\boldsymbol\alpha}
We see that $|\frac1b\cdot\frac{\partial b}{\partial \al_i}|<30$  for all $0\leq i\leq 3$. 
Thus, if we know that $b(c,\al_0,\al_1,\al_2,\al_3)\leq B$ for some $B$, this means that we can bound $b(4,\al_0,\al_1,\al_2,\al_3)<\rho$ by checking that $b(4,\al_0,\al_1,\al_2,\al_3)<\rho-\ep$ on a grid with step-size  $\delta\leq \ep/(2\cdot B\cdot 30)$.  

The C++ program in Appendix \ref{code} checks that $b(4,\al_0,\al_1,\al_2,\al_3)<.949$ on a grid with step-size $\delta=.0008$ (it completes in around an hour or less on a standard desktop computer, and is available for download from the authors' websites).   Suppose now that $B\geq 1$ is the supremum of $b(4,\al_0,\al_1,\al_2,\al_3)$ in the region of interest.  For $\ep=60 \delta B=0.048B$, we must have at some $\delta$-grid point that $b(4,\al_0,\al_1,\al_2,\al_3)\geq B-\ep=.962B\geq .962$.  This contradicts the computer-assisted bound of $<.949$ on the grid, completing the proof of Theorem \ref{t.l4}.\qed

\newpage
\appendix
\section{C++ code to check function bound}
\label{code}
\begin{verbatim}
#include <iostream>
#include <math.h>
#include <stdlib.h>
using namespace std;
int main(int argc, char* argv[]){
  double delta=.0008;        //step size                                                                                        
  double maxIndSet=.6;      //no independent sets larger than this fraction                                                    
  double minClass=.06;      //all color classes larger than this fraction                                                        
  double val=0;
  double maxval=0;
  double maxa0,maxa1,maxa2,maxa3; //to record the coordinates of max value                                                        
  maxa0=maxa1=maxa2=maxa3=0;
  double A23,A,B,C;           //For precomputing parts of the function                                                         
  double c=4;
  for (double a3=minClass; a3 + 4*minClass<1; a3+=delta){
    B=exp(c*a3)-1;
    for (double a2=minClass; a3 + a2 + 3*minClass<1; a2+=delta){
      A23=1/(pow(a2,a2)*pow(a3,a3)) * exp(-c/2)
                 * pow(exp(c*a2)-1,a3) * pow(1-exp(-a3/a2)*exp(-c*a3),a2);
      for (double a1=minClass;
           a3+a1<maxIndSet && a3 + a2 + a1 + 2*minClass<1;
           a1+=delta){
        A=A23/pow(a1,a1)* pow(exp(c*a1)-1,a2);
        for (double a0=max(max(minClass,.4-a2-a3),.4-a1-a3);
             a2+a0<maxIndSet && a3+a0<maxIndSet
              && a3 + a2 + a1 + a0 + minClass<1;
             a0+=delta){
          double a4=1-a0-a1-a2-a3;
          C=exp(c*a0);
          val=1/pow(a0,a0) * A * pow(B*C/a4,a4)* pow(C-1,a1);
          if (val>maxval){
            maxval=val; 
            maxa0=a0; maxa1=a1; maxa2=a2; maxa3=a3;
          }
        }
      }
    }
  }
  cout << "Max is "<<maxval<<", obtained at ("
       <<maxa0<<","<<maxa1<<","<<maxa2<<","<<maxa3<<","
       <<1-maxa0-maxa1-maxa2-maxa3<<")"<<endl;
}
\end{verbatim}
\newpage
program output:
\begin{verbatim}
$./bound
Max is 0.948754, obtained at (0.2904,0.2568,0.1704,0.1632,0.1192)
\end{verbatim}
\old{
\begin{algorithmic}
 \Procedure{Bound-$b$}{$\rho$}
  \State $M\gets 0$.
  \For {($x_0=.06;\quad x_0<.6;\quad x_0\gets x_0+\Delta_0$)}
   \State $\Delta_0\gets 1$
   \For {($x_1=.06;\quad x_0+x_1<.82;\quad x_1\gets x_1+\Delta_1$)}
    \State $\Delta_1\gets 1$
    \For {($x_2=.06;\quad x_0+x_1+x_2<.88;\quad x_2\gets x_2+\Delta_2$)}  
     \State $\Delta_2\gets 1$
     \For {($x_3=.06;\quad .4<x_0+x_1+x_2+x_3<.94;\quad x_3\gets x_3+\Delta_3$)}  
      \State $M\gets \max(M,b(c,x_0,x_1,x_2,x_3))$
      \State $\Delta_3 \gets ((\rho-b(4,x_0,x_1,x_2,x_3))+(\rho-.95))/(4\cdot 250)$
      \State $\Delta_2 \gets \min(\Delta_2,\Delta_3)$
     \EndFor
     \State $\Delta_1 \gets \min(\Delta_1,\Delta_2)$
    \EndFor
   \State $\Delta_0 \gets \min(\Delta_0,\Delta_1)$
   \EndFor
  \EndFor
 \If{$M<.95$}
  \State \Return{``$b$ bounded by $\rho$''}
 \Else{}
  \State \Return{``algorithm assumptions failed''}
  \EndIf
 \EndProcedure
\end{algorithmic}
(Note that if $b$ was not bounded by $\rho$, the algorithm might well never terminate.)
}

\end{document}